\newtheorem{theorem}{Theorem}[section]
\newtheorem{theorem*}{Theorem}
\newtheorem{lemma}[theorem]{Lemma}
\newtheorem{proposition}[theorem]{Proposition}
\theoremstyle{definition}
\newtheorem{example}[theorem]{Example}
\newtheorem{remark}[theorem]{Remark}
\DeclareMathOperator{\End}{End}
\DeclareMathOperator{\Gal}{Gal}
\DeclareMathOperator{\Perm}{Perm}
\DeclareMathOperator{\mult}{mult}
\begin{document}

\newcommand{\gen}[1]{\langle #1 \rangle}
\newcommand{\F}[1]{\mathbb{F}_{p^{#1}}}
\newcommand{\sg}[1]{^{\sigma^{ #1 }}}
\newcommand{\Q}{\mathbb{Q}}

\newcommand{\mmod}{\hbox{\,mod\,}}
\newcommand{\upcarat}{\^\,\hspace{-.5mm}}
\newcommand{\Pf}{{\noindent\it Proof.\ }}
\newcommand{\ePf}{\hfill $\Box$\vspace{1cm}}
\newcommand{\be}{\begin{eqnarray}}
\newcommand{\ee}{\end{eqnarray}}
\newcommand{\ben}{\begin{eqnarray*}}
\newcommand{\een}{\end{eqnarray*}}
\newcommand{\dis}{\displaystyle}
\newcommand{\Cl}{\mathcal C}
\newcommand{\Z}{\mathbb Z}
\newcommand{\Ho}{\hbox{Hom}_{\scriptsize\hbox{R-alg}}}
\newcommand{\C}{\mathbb C}

\title[The Structure of Hopf Algebras Acting on Dihedral Extensions]{The Structure of Hopf Algebras Acting on Dihedral Extensions}

\author{Alan Koch}
\address{Department of Mathematics, Agnes Scott College, 141 E. College Ave., Decatur, GA 30030 USA}
\email{akoch@agnesscott.edu}
\author{Timothy Kohl}
\address{Department of Mathematics and Statistics, Boston University, 111 Cummington Mall, Boston, MA 02215 USA}
\email{tkohl@math.bu.edu} 
\author{Paul J.~Truman}
\address{School of Computing and Mathematics, Keele University, Staffordshire, ST5 5BG, UK}
\email{P.J.Truman@Keele.ac.uk}
\author{Robert Underwood}
\address{Department of Mathematics and Computer Science, Auburn University at Montgomery, Montgomery, AL, 36124 USA}
\email{runderwo@aum.edu}

\date{\today}

\begin{abstract}
We discuss isomorphism questions concerning the Hopf algebras that yield Hopf-Galois structures for a fixed separable field extension $L/K$. We study in detail the case where $L/K$ is Galois with dihedral group $D_p$, $p\ge 3$ prime and give explicit descriptions of the Hopf algebras which act on $L/K$.  We also determine when two such Hopf algebras are isomorphic, either as Hopf algebras or as algebras. For the case $p=3$ and a chosen $L/K$, we give the Wedderburn-Artin decompositions of the Hopf algebras. 
\end{abstract}

\maketitle

{\em keywords:}\ Hopf algebra; Hopf-Galois structure; separable extension; dihedral group

{\em MSC:}\ 11R32; 16T05; 20B35; 12F10

\section{Introduction}

Galois theory for purely inseparable field extensions was first considered by Jacobson \cite{Jacobson44}.  More broadly, Chase and Sweedler defined the notion of a Hopf algebra acting on a purely inseparable extension of fields to obtain a weak analogue to the Fundamental Theorem of Galois theory \cite{ChaseSweedler69}.  The construction of these ``Hopf-Galois'' structures however, applies not only to purely inseparable field extensions but also to separable field extensions, as well as extensions of commutative rings (which will not be considered here).

In the groundbreaking paper \cite{GreitherPareigis87}, Greither and Pareigis obtain a classification of Hopf-Galois structures on separable field extensions $L/K$. The most remarkable aspect of their classification is that it is entirely group-theoretic, depending on $\Gal(E/K)$ and $\Gal(E/L)$, where $E$ is the normal closure of $L/K$: the structure of $L/K$ is irrelevant (aside from these Galois groups). Much of the work to date has focused on counting the number of Hopf-Galois extensions, either directly (see, e.g., \cite{Byott04},\cite{CarnahanChilds99},\cite{Childs07},\cite{FeatherstonhaughCarantiChilds12},\cite{Kohl98},\cite{Kohl07},\cite{Kohl13},\cite{Kohl16}) or through results which facilitate computations (e.g., \cite{Byott96}).

In the three decades since the publication of \cite{GreitherPareigis87}, what has been lacking is a thorough investigation into the structure of the Hopf algebras which produce these Hopf-Galois structures. It is unclear how much the structure of the Hopf algebras also depends on group theory. To this end, we introduce some questions for study, including:
\begin{enumerate}
\item Can a single $K$-Hopf algebra determine more than one Hopf-Galois structure on $L/K$?
\item Can two non-isomorphic $K$-Hopf algebras, each of which giving a Hopf-Galois structure on $L/K$, become isomorphic upon base change to some intermediate field $K\subset F\subset L$? 
\item Can a single $K$-algebra be endowed with multiple coalgebra structures, resulting in multiple (non-isomorphic) Hopf algebras, giving different Hopf-Galois structures on $L/K$? 
\item Can two non-isomorphic $K$-algebras, each of which giving a Hopf-Galois structure on $L/K$ (after being endowed a coalgebra structure), become isomorphic as algebras upon base change to some intermediate field $K\subset F\subset L$? 
\end{enumerate}

It is not known whether the answers to these questions depend on knowledge of the fields, aside from their automorphism groups. Some specific cases--most notably the cases where $\Gal(L/K)$ is cyclic or elementary abelian--have been investigated by the authors in \cite{TARP17c}.  We note that L. Childs \cite[Theorem 5]{Childs13} has shown that abelian fixed-point free endomorphisms
of $\Gal(L/K)$ determine Hopf Galois structures on $L/K$ whose Hopf algebras are isomorphic to $H_\lambda$ (see 
Example \ref{canon} for the definition of $H_\lambda$).  Childs applies his result to the cases where $\Gal(L/K)$ is the symmetric group $S_n$, 
$n\ge 5$, and the dihedral group of order $4n$, $n\ge 2$.  Thus in these cases, Childs has obtained an 
affirmative answer to Question (1).  In this paper, we will focus on the case where $L/K$ is Galois with $\Gal(L/K)=D_p$, the dihedral group of order $2p$ for some prime $p\ge 3$. In this instance, the questions above do in fact have satisfying group theoretic answers. 

We start by reviewing Hopf-Galois theory and the Greither-Pareigis theory that yields the classification of Hopf-Galois structures in the separable case. We then apply Greither-Pareigis theory to describe the Hopf-Galois structures in the case that $L/K$ is Galois with group $D_p$; there are $p+2$ such structures, and while this was known previously (see \cite{Byott04}) we provide a simpler description.   We will show that there are three Hopf algebra isomorphism classes, and that base changing to a proper intermediate field still results in three distinct Hopf algebra classes. On the other hand, there are two $K$-algebra isomorphism classes, and base changing to any intermediate field (even $L$ itself) does not change these isomorphism classes. We then find explicit bases for each of our Hopf algebras, and specializing to the case $p=3$, we give an even more detailed description of the algebra structure. 

In contrast, we point out that the Hopf-Galois theory in the purely inseparable case differs greatly to what is presented here.  For example, if $L=K(x),\ x^p\in K,\;x\not\in K$, is a purely inseparable extension ($p$ prime), then 
$H=K[t]/(t^p)$, $t$ primitive, can act in an infinite number of ways (see, e.g., \cite{Koch15}), allowing for an infinite number of Hopf-Galois structures.

We thank the referee for comments and suggestions which improved the exposition and content of this paper.

\section{Hopf Galois theory}

In this section we recall the notion of a Hopf algebra, a Hopf-Galois extension, and the Greither-Pareigis classification.

A {\em bialgebra} over a field $K$ is a $K$-algebra $B$ together with $K$-algebra maps $\Delta: B\rightarrow B\otimes_K B$ (comultiplication) and $\varepsilon: B\rightarrow K$ (counit) which satisfy the conditions
\[(I\otimes \Delta)\Delta = (\Delta \otimes I)\Delta,\]
\[\mult(I\otimes \varepsilon)\Delta = I = \mult(\varepsilon\otimes I)\Delta,\]
where $\mult: B\otimes_K B\rightarrow B$ is the multiplication map of $B$ and $I$ is the identity map on $B$.
A {\em Hopf algebra} over $K$ is a $K$-bialgebra $H$ with a $K$-linear map $\sigma:H\rightarrow H$ which satisfies
\[\mult(I\otimes \sigma)\Delta(h) = \varepsilon(h)1_H = \mult(\sigma\otimes I)\Delta(h),\]
for all $h\in H$.  A $K$-Hopf algebra $H$ is {\em cocommutative} if $\Delta=\tau\circ \Delta$, where $\tau: H\otimes_K H\rightarrow
H\otimes_K H$, $a\otimes b\mapsto b\otimes a$ is the twist map. 

Let $L$ be a finite extension of $K$ and let ${\mathrm m}: L\otimes_K L\rightarrow L$ denote multiplication in $L$.  Let $H$ be a finite dimensional, cocommutative $K$-Hopf algebra and suppose there is a $K$-linear action of $H$ on $L$ which satisfies
\begin{align*}
h\cdot(xy) &=({\mathrm m}\circ \Delta)(h)(x\otimes y) \\
h\cdot 1 & =\varepsilon(h)1
\end{align*}
for all $h\in H,\;x,y\in L$, and that the $K$-linear map $j: L\otimes_K H \to \End_K(L),\; j(x\otimes h)(y)=x(h\cdot y)$ is an isomorphism of vector spaces over $K$. Then we say $H$ provides a {\it{Hopf-Galois structure}} on $L/K$.

\begin{example}
Suppose $L/K$ is Galois with Galois group $G$. Let $H=K[G]$ be the group algebra, which is a Hopf algebra via $\Delta(g)=g\otimes g,\;\varepsilon(g)=1$, $\sigma(g)=g^{-1}$, for all $g\in G$. The action
\[\Big(\sum r_gg\Big)\cdot x = \sum r_g(g(x)) \]
provides the ``usual'' Hopf-Galois structure on $L/K$ which we call the {\it{classical}} Hopf-Galois structure.
\end{example}

In general, the process of finding a Hopf algebra and constructing an action may seem daunting, but in the separable case Greither and Pareigis \cite{GreitherPareigis87} have provided a complete classification of such structures. Let $L/K$ be separable with normal closure $E$. Let $G=\Gal(E/K)$, $G'=\Gal(E/L)$, and $X=G/G'$. Denote by $\Perm(X)$ the group of permutations of $X$.  A subgroup $N\le \Perm(X)$ is {\em regular} if $|N|=|X|$ and $\eta[xG']\ne xG'$ for all $\eta\ne 1_N, xG'\in X$. 
Let $\lambda: G\rightarrow \Perm(X)$, $\lambda(g)(xG')=gxG'$, denote the left translation map.  A subgroup $N\le \Perm(X)$ is {\em normalized} by $\lambda(G)\le \Perm(X)$ if $\lambda(G)$ is contained in the normalizer of $N$ in $\Perm(X)$. 

\begin{theorem} [Greither-Pareigis]\label{GP} Let $L/K$ be a finite separable extension.  There is a one-to-one correspondence between Hopf Galois structures on $L/K$ and regular subgroups of $\mathrm{Perm}(X)$ that are normalized by $\lambda(G)$.
\end{theorem}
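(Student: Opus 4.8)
The plan is to pass to the normal closure $E$, where everything becomes ``split,'' classify the Hopf-Galois structures there, and then use Galois descent to single out those that come from $K$.

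First I would record the fundamental isomorphism of $E$-algebras
\[
E\otimes_K L\;\cong\;\mathrm{Map}(X,E),
\]
the algebra of $E$-valued functions on $X=G/G'$, under which a simple tensor $a\otimes b$ is sent to the function whose value at the coset $xG'$ is $a\cdot x(b)$; this is well defined because $G'=\Gal(E/L)$ fixes $L$ pointwise, and it is an isomorphism because $L/K$ is separable. The group $G$ acts semilinearly on the left through the first factor, and correspondingly on the right by $(g\cdot f)(xG')=g\big(f(g^{-1}xG')\big)$, i.e.\ through the translation action $\lambda$ of $G$ on $X$ together with the Galois action on values; note that $(E\otimes_K L)^G=L$. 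Galois descent then provides an equivalence between $K$-vector spaces carrying a given algebraic structure and $E$-vector spaces carrying that structure together with a compatible semilinear $G$-action, with quasi-inverse $M\mapsto M^G$. Under this equivalence base change $H\mapsto E\otimes_K H$ sends a Hopf-Galois structure on $L/K$ to a $G$-stable Hopf-Galois structure on $\mathrm{Map}(X,E)/E$, and every $G$-stable structure descends; so it suffices to classify the $G$-stable Hopf-Galois structures on the split algebra $\mathrm{Map}(X,E)$.

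Second I would classify all Hopf-Galois structures on $\mathrm{Map}(X,E)$ over $E$ (cocommutative, as the definition requires), temporarily forgetting the $G$-action, and show they correspond bijectively to regular subgroups $N\le\Perm(X)$. In one direction, a regular subgroup $N$ permutes $X$, hence acts on $\mathrm{Map}(X,E)$ by $(\eta\cdot f)(x)=f(\eta^{-1}x)$, and the group algebra $E[N]$ acting this way makes $\mathrm{Map}(X,E)$ Hopf-Galois: simple transitivity of $N$ on $X$ is exactly what forces the Galois map $j$ to be an isomorphism. For the converse, given a cocommutative Hopf-Galois structure with Hopf algebra $\tilde H$, I would dualize to the commutative $E$-Hopf algebra $\tilde H^{*}$, representing a finite group scheme $G_0=\Sp(\tilde H^{*})$, and observe that the Galois condition says precisely that $X=\Sp\big(\mathrm{Map}(X,E)\big)$ is a $G_0$-torsor. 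Since $\mathrm{Map}(X,E)$ is split (a product of copies of $E$), this torsor has rational points and is therefore trivial, forcing $G_0$ to be the constant group scheme on a finite group $N$ with $|N|=|X|$ and the torsor structure to be a simply transitive, i.e.\ regular, action of $N$ on $X$. This classification is the technical heart of the proof: it is where the structure theory of cocommutative Hopf algebras (equivalently, finite group schemes) over $E$ is used to exclude any Hopf-Galois structure not arising from a group algebra.

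Finally I would reinstate the $G$-action and translate $G$-stability into the normalizer condition. The semilinear action of $G$ on $E[N]$ (Galois on scalars, and conjugation by $\lambda(g)$ on $N\le\Perm(X)$) is by Hopf-algebra automorphisms precisely when each $\lambda(g)$ normalizes $N$; equivalently, $G$ acts on $\Perm(X)$ by conjugation through $\lambda(G)$, and the structure attached to $N$ is $G$-stable if and only if $\lambda(g)N\lambda(g)^{-1}=N$ for all $g\in G$, that is, if and only if $N$ is normalized by $\lambda(G)$. Combining this with the first step gives the asserted one-to-one correspondence, and tracing the descent exhibits the associated $K$-Hopf algebra explicitly as the fixed ring $H=(E[N])^{G}$, which satisfies $E\otimes_K H\cong E[N]$ and acts on $L=(\mathrm{Map}(X,E))^{G}$ by restriction. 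I expect the second step to be the main obstacle; once the split case is settled, the descent bookkeeping of the first and third steps is routine.
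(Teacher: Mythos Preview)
The paper does not give its own proof of this theorem: it is quoted as background from \cite{GreitherPareigis87}, and only one direction of the correspondence is explained afterward (from a regular subgroup $N$ normalized by $\lambda(G)$ to the Hopf algebra $H=(E[N])^G$ via Galois descent, with references to \cite{GreitherPareigis87} and \cite{Ch00} for the verification that $H$ really gives a Hopf-Galois structure). There is no argument in the paper for the converse direction at all.

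Your proposal, by contrast, is a correct outline of the \emph{full} proof, and it is essentially the original Greither--Pareigis argument: pass to the split $E$-algebra $E\otimes_K L\cong \mathrm{Map}(X,E)$, classify cocommutative Hopf-Galois structures on $\mathrm{Map}(X,E)/E$ by interpreting the dual as a finite commutative group scheme for which $\Sp(\mathrm{Map}(X,E))$ is a torsor with a rational point (hence trivial, forcing the group scheme to be constant and the action regular), and then observe that $G$-equivariance under descent is exactly the condition that $\lambda(G)$ normalize $N$. The portion of your sketch that overlaps with what the paper actually writes---the descent from $E[N]$ to $(E[N])^G$ and the resulting identification $E\otimes_K H\cong E[N]$---matches the paper's exposition. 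The substantive part you supply that the paper omits is your second step, the split-case classification; that is indeed the heart of the Greither--Pareigis proof, and your torsor argument is the standard way to carry it out.
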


One direction of this correspondence works by Galois descent:\ Let $N$ be a regular subgroup normalized by $\lambda(G)$.  
Then $G$ acts on the group algebra $E[N]$ through the Galois action on $E$ and conjugation by $\lambda(G)$ on $N$, i.e., 
\[g(x\eta) = g(x)(\lambda(g)\eta\lambda(g^{-1})), g\in G, \;x\in E,\; \eta\in N.\]
For simplicity, we will denote the conjugation action of $\lambda(g)\in\lambda(G)$ on $\eta\in N$ by $^g\eta$.
We then define 
\[H=(E[N])^G=\{x\in E[N]:\ g(x)=x, \forall g\in G\}.\]
The action of $H$ on $L/K$ is thus
\[\Big(\sum_{\eta\in N} r_{\eta} \eta \Big)\cdot x = \sum_{\eta\in N}r_{\eta} \eta^{-1}[1_G](x),\] 
see \cite[Proposition 1]{Childs11}.

The fixed ring $H$ is an $n$-dimensional $K$-Hopf algebra, $n=[L:K]$, and $L/K$ has a Hopf Galois structure via $H$ \cite[p. 248, proof of 3.1 (b)$\Longrightarrow$ (a)]{GreitherPareigis87}, \cite[Theorem 6.8, pp. 52-54]{Ch00}.    
By \cite[p. 249, proof of 3.1, (a) $\Longrightarrow$  (b)]{GreitherPareigis87},
\[E\otimes_K H \cong E\otimes_K K[N]\cong E[N],\]
as $E$-Hopf algebras,
that is, $H$ is an {\em $E$-form} of $K[N]$. 

Theorem \ref{GP} can be applied to the case where $L/K$ is Galois with group $G$ (thus, $E=L$, $G'=1_G$, $G/G'=G$).
In this case the Hopf Galois structures on $L/K$ correspond to regular subgroups of $\mathrm{Perm}(G)$ normalized by 
$\lambda(G)$, where $\lambda: G\rightarrow \mathrm{Perm}(G)$, $\lambda(g)(h)=gh$, is the left regular representation. 

\begin{example}\label{class} Suppose $L/K$ is a Galois extension, $G=\Gal(L/K)$. Let $\rho: G\rightarrow \mathrm{Perm}(G)$
be the right regular representation defined as $\rho(g)(h)=hg^{-1}$ for $g,h\in G$.  
Then $\rho(G)$ is a regular subgroup normalized by $\lambda(G)$, since 
$\lambda(g)\rho(h)\lambda(g^{-1}) = \rho(h)$ for all $g,h\in G$; $N$ corresponds to a Hopf-Galois structure with 
$K$-Hopf algebra $H=L[\rho(G)]^G = K[G]$, the usual group ring Hopf algebra with its usual action on $L$.  
Consequently, $\rho(G)$
corresponds to the classical Hopf Galois structure.  
\end{example}

\begin{example}\label{canon} Again, suppose $L/K$ is Galois with group $G$. Let $N=\lambda(G)$.  Then $N$ is a regular subgroup of $\Perm(G)$ which is normalized by $\lambda(G)$, and $N=\rho(G)$ if and only if $N$ abelian.  We denote the corresponding Hopf algebra by $H_{\lambda}$.  If $G$ is non-abelian, then $\lambda(G)$ 
corresponds to the {\em canonical non-classical} Hopf-Galois structure. 
\end{example}

Thus, for $G$ non-abelian there are at least two Hopf-Galois structures on $L/K$.  We remark that if $L/K$ is Galois with $G$ simple and non-abelian, then these are the only Hopf Galois structures on $L/K$ \cite{By04a}.

Note that with the classical and the canonical non-classical structures, the regular subgroup $N\le \Perm(G)$ is isomorphic to $G$. The following example shows that this need not be the case in general.

\begin{example} Let $L=\mathbb{Q}(\sqrt{2},\sqrt{3})$. Then $L/\mathbb{Q}$ is Galois with elementary abelian Galois group $G=\gen{r,s}$ with
\[r(\sqrt{2}+\sqrt{3})=\sqrt{2}-\sqrt{3},\;s(\sqrt{2}+\sqrt{3})=-\sqrt{2}+\sqrt{3}.\]
Let $\eta\in\Perm(G)$ be defined by $\eta(r^is^j)=r^{i-1}s^{i+j-1}$, and let $N=\gen{\eta}$. It is routine to verify that $N$ is a regular subgroup of $\Perm(G)$ which is normalized by $\lambda(G)$. Since $N$ is cyclic of order $4$, $N\not\cong G$.
\end{example}

\section{The group $D_p$}

Throughout this section, we let $D_p$ denote the dihedral group of order $2p$ for $p$ an odd prime.  Explicitly, we write
\[D_p=\gen{r,s : r^p=s^2=rsrs=1}.\] 
Let $L/K$ be a Galois extension with group $D_p$.  We shall describe all of the regular subgroups of $\Perm(D_p)$ normalized by 
$\lambda(D_p)$, and then address the isomorphism questions given in the Introduction.   By Examples \ref{class} and \ref{canon}  we have regular subgroups $\rho(D_p),\;\lambda(D_p)$ normalized by $\lambda(D_p)$.   
We construct other regular subgroups of $\Perm(D_p)$.

\begin{lemma}
Pick $0\le c \le p-1$. Let $\eta_c=\lambda(r)\rho(r^cs)\in\Perm(D_p)$, and let $N_c=\gen{\eta_c}$. Then $N_c\cong C_{2p}$, the cyclic group of order $2p$, and the $N_c$ are distinct as sets.   Moreover, $N_c$ is a regular subgroup of $\Perm(D_p)$ normalized by $\lambda(D_p)$.  
\end{lemma}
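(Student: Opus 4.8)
The plan is to work from the explicit formula for $\eta_c$, exploiting that the left and right regular representations commute. Writing out the action, $\eta_c(h) = \lambda(r)\rho(r^c s)(h) = r h (r^c s)^{-1} = r h r^c s$, since $r^c s$ is a reflection and hence an involution. Because $\lambda$ and $\rho$ are commuting homomorphisms $D_p \to \Perm(D_p)$, powers of $\eta_c$ split as $\eta_c^k = \lambda(r^k)\rho((r^c s)^k)$. The relation $(r^c s)^2 = 1$ together with $r$ having order $p$ then shows $\eta_c^k = \lambda(r^k)$ when $k$ is even and $\eta_c^k = \lambda(r^k)\rho(r^c s)$ when $k$ is odd. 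Evaluating at $1_{D_p}$ shows an odd power can never be the identity (its value $r^{k+c}s$ is a reflection), while an even power is the identity exactly when $p \mid k$; since $p$ is odd, the least such positive $k$ is $2p$. This gives $N_c \cong C_{2p}$ and establishes the first assertion.

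For distinctness, and later for normalization, I would first record the elementary fact that the expression $\lambda(g)\rho(h)$ determines $g$ and $h$ uniquely: if $\lambda(g_1)\rho(h_1) = \lambda(g_2)\rho(h_2)$, evaluating on all $x \in D_p$ forces $z := g_2^{-1}g_1 = h_2^{-1}h_1$ to commute with every element of $D_p$, and the center of $D_p$ is trivial for $p$ odd, so $g_1 = g_2$ and $h_1 = h_2$. Distinctness of the $N_c$ as sets then follows: if $N_c = N_{c'}$ then $\eta_{c'} \in N_c$, so $\eta_{c'} = \eta_c^k$ for some $k$; comparing the (necessarily odd-power) forms $\lambda(r)\rho(r^{c'}s) = \lambda(r^k)\rho(r^c s)$ and using uniqueness gives $r^{c'}s = r^c s$, whence $c = c'$.

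Regularity I would verify directly by showing that $N_c$ acts without fixed points, which, since $|N_c| = 2p = |D_p|$, is equivalent to regularity. For a nontrivial even power $\eta_c^k = \lambda(r^k)$ with $r^k \ne 1$, the image $\eta_c^k(h) = r^k h \ne h$. For an odd power, evaluating $\eta_c^k$ on a rotation $r^a$ yields the reflection $r^{k+a+c}s$, and on a reflection $r^a s$ yields the rotation $r^{k+a-c}$ (using $s r^c s = r^{-c}$); in either case the image lies in the coset of $\gen{r}$ opposite to that of $h$, so no fixed point occurs.

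The normalization by $\lambda(D_p)$ is where the real content lies, and I expect it to be the main obstacle. Since the normalizer is a subgroup and $r,s$ generate $D_p$, it suffices to conjugate the generator $\eta_c$ by $\lambda(r)$ and $\lambda(s)$. Commutativity of $\lambda$ and $\rho$ gives ${}^{g}\eta_c = \lambda(grg^{-1})\rho(r^c s)$, so ${}^{r}\eta_c = \eta_c \in N_c$ immediately, while ${}^{s}\eta_c = \lambda(r^{-1})\rho(r^c s)$ using $srs = r^{-1}$. The point is then to recognize $\lambda(r^{-1})\rho(r^c s)$ as an odd power of $\eta_c$: the odd powers are exactly the elements $\lambda(r^k)\rho(r^c s)$ with $k$ odd, and as $k$ runs over odd integers the residues $k \bmod p$ exhaust $\Z/p\Z$ because $2$ is invertible modulo the odd prime $p$. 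In particular some odd $k$ satisfies $k \equiv -1 \pmod p$, giving ${}^{s}\eta_c = \eta_c^k \in N_c$ and completing the verification that $\lambda(D_p)$ normalizes $N_c$.
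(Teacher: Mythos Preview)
Your proof is correct and follows essentially the same strategy as the paper: exploit that $\lambda$ and $\rho$ commute, compute powers of $\eta_c$, check stabilizers, and conjugate the generator by $\lambda(r)$ and $\lambda(s)$.

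Two places where the paper is a bit cleaner are worth noting. For distinctness, rather than proving a uniqueness lemma for expressions $\lambda(g)\rho(h)$, the paper simply observes that the unique element of order $2$ in $N_c$ is $\eta_c^{\,p}=\rho(r^cs)$, so $N_c=N_d$ would force $r^cs=r^ds$. For the conjugation by $\lambda(s)$, instead of arguing that $\lambda(r^{-1})\rho(r^cs)$ must be \emph{some} odd power of $\eta_c$, the paper identifies it directly as $\eta_c^{-1}$: since $r^cs$ is an involution, $\eta_c^{-1}=\lambda(r^{-1})\rho((r^cs)^{-1})=\lambda(r^{-1})\rho(r^cs)$. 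This exact formula ${}^{s}\eta_c=\eta_c^{-1}$ (together with ${}^{r}\eta_c=\eta_c$) is invoked verbatim later when proving that the $H_c$ are all isomorphic as Hopf algebras, so it is worth recording in this sharper form.
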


\begin{proof} Suppose $0\le c\le p-1$.  Since left and right representations commute with each other, $\eta_c^i=\lambda(r^i)\rho((r^cs)^i)$. As $|r|=p$ and $|r^cs|=2$ it follows that $|N_c|=2p$, thus $N_c\cong C_{2p}$.  Now suppose $0\le d\le p-1$,
$c\not = d$.  Since $N_c\cong C_{2p}$, it contains a unique element of order $2$, which is $\eta_c^p=\rho(r^cs)$.  Similarly, the unique element of order $2$ in $N_d$ is  $\eta_d^p=\rho(r^ds)$.  Since $c\not = d$, this shows that $N_c\not = N_d$. 

It remains to show that the stabilizer in $N_c$ of any element in $D_p$ is trivial, and that $N_c$ is normalized by $\lambda(D_p)$. 
For the remainder of the proof, we write $\eta$ for $\eta_c$ and $N$ for $N_c$.    Let $x\in D_p$ and suppose $\eta^i[x]=x$. Then
\[x=\eta^i[x]= \lambda(r^i)\rho((r^cs)^i)[x] = r^ix(r^cs)^{-i},\]
and so,
\[1_{D_p} = x^{-1}r^ix(r^cs)^{-i}=r^{\pm i}(r^cs)^{-i}.\]
which cannot happen unless $i=0$.  Hence $\eta^i = 1_{N}$ and $N\le \Perm(D_p)$ is regular.

We now show that $N$ is normalized by $\lambda(D_p)$.  Of course, it suffices to show that $^r\eta\in N$ and $^s\eta\in N$. We have, for $x\in D_p$,
\begin{align*}
^r\eta[x]&=\lambda(r)\big(\lambda(r)\rho(r^cs)\big)\lambda(r^{-1})[x] = rxr^cs = \eta[x]\\
^s\eta[x]&=\lambda(s)\big(\lambda(r)\rho(r^cs)\big)\lambda(s)[x]=srsxr^cs = r^{-1}xr^cs =r^{-1}x(r^cs)^{-1}= \eta^{-1}[x].
\end{align*} 
Thus $^r\eta=\eta,\;^s\eta=\eta^{-1}$, and $N$ is normalized by $\lambda(D_p)$.
\end{proof}

By \cite[Corollary 6.5]{Byott04}, the collection $\{\rho(D_p),\lambda(D_p),N_0,\dots,N_{p-1}\}$ is the complete set of all regular subgroups of $\Perm(D_p)$ normalized by $\lambda(D_p)$, hence the corresponding Hopf algebras give all of the Hopf-Galois structures on $L/K$.   We will denote the Hopf algebra corresponding to $N_c$ by $H_c$ for all $c$.

\section{The Hopf algebra isomorphism questions}

Let $L/K$ be Galois with group $D_p$.   We have seen that the Hopf algebras which give Hopf-Galois structures on $L/K$ are
\[\{K[D_p],H_{\lambda},H_0,\dots, H_{p-1}.\} \]
Here, we will investigate when two of these Hopf algebras are isomorphic. Note that throughout this section, when working with Hopf algebras, ``isomorphic'' refers to isomorphic as Hopf algebras; considering isomorphisms as algebras will be discussed in the next section.

Clearly, $H_c$ cannot be isomorphic to either $K[D_p]$ or $H_{\lambda}$ since it is commutative. It remains to determine whether $K[D_p]\cong H_{\lambda}$ or whether $H_c\cong H_d$ for some $0 \le c < d\le p-1$.

Generally, Hopf isomorphism questions reduce to group isomorphism questions.

\begin{proposition}\label{me} Let $L/K$ be a finite separable extension, with Galois closure $E$, and let 
$G=\Gal(E/K)$, $G'=\Gal(E/L)$.   Let $X=G/G'$. Let $N,N'$ be regular subgroups of $\Perm(X)$ which are normalized by $\lambda(G)$, and let $H,H'$ be their corresponding Hopf algebras. If $H\cong H'$ then $N\cong N'$.
\end{proposition}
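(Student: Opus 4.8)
The plan is to exploit the base-change relation recorded just before Theorem~\ref{GP}, namely that $E\otimes_K H\cong E[N]$ as $E$-Hopf algebras (and likewise $E\otimes_K H'\cong E[N']$), and then to recover the abstract group $N$ from the $E$-Hopf algebra $E[N]$ by inspecting its group-like elements. Since the hypothesis concerns an isomorphism of $K$-Hopf algebras but the groups $N,N'$ only appear after extending scalars to $E$, the natural move is to push everything up to $E$, compare the resulting group algebras there, and then extract the groups.

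First I would observe that a $K$-Hopf algebra isomorphism $H\cong H'$ survives base change: applying the functor $E\otimes_K(-)$ yields an isomorphism of $E$-Hopf algebras $E\otimes_K H\cong E\otimes_K H'$. Combining this with the two base-change isomorphisms above gives $E[N]\cong E[N']$ as Hopf algebras over $E$. Next I would recover $N$ from $E[N]$ intrinsically. For any Hopf algebra $A$ over a field, write $\mathcal G(A)=\{a\in A:\ \Delta(a)=a\otimes a,\ \varepsilon(a)=1\}$ for its set of group-like elements; this is a group under the multiplication of $A$, and distinct group-like elements are linearly independent. In $E[N]$ each $\eta\in N$ is group-like, and these $|N|$ elements form an $E$-basis; by linear independence of group-likes there can be no others, so $\mathcal G(E[N])=N$ as groups, and similarly $\mathcal G(E[N'])=N'$. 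Since any Hopf algebra isomorphism carries group-like elements to group-like elements and respects multiplication and the counit, the isomorphism $E[N]\cong E[N']$ restricts to a group isomorphism $\mathcal G(E[N])\to\mathcal G(E[N'])$, that is, $N\cong N'$.

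The routine verifications — that $E\otimes_K(-)$ carries the Hopf structure maps of $H$ to those of $E\otimes_K H$, and that a Hopf morphism preserves group-likes — are straightforward. The one point deserving genuine care, and where the argument really lives, is the identity $\mathcal G(E[N])=N$. This rests on the linear independence of distinct group-like elements (the Hopf-algebraic analogue of Dedekind's independence of characters, valid over any field) together with the dimension count $\dim_E E[N]=|N|<\infty$, which forces every group-like to lie in $N$. Without this, passing from the group to its group algebra and back could in principle lose information; it is precisely the rigidity of group-likes that makes the recovery of $N$ from the Hopf structure possible.
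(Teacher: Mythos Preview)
Your proof is correct and follows essentially the same route as the paper: base change the $K$-Hopf isomorphism to $E$, invoke $E\otimes_K H\cong E[N]$ to obtain $E[N]\cong E[N']$ as $E$-Hopf algebras, and then conclude $N\cong N'$. The only difference is that the paper simply quotes the fact that group algebras over a field are Hopf-isomorphic if and only if the underlying groups are, whereas you spell out the standard justification via group-like elements.
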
 

\begin{proof} If $H\cong H'$ then $(E[N])^G\cong (E[N'])^G$, hence $E\otimes_K (E[N])^G\cong 
E\otimes_K (E[N'])^G$. However $E\otimes_K (E[N])^G\cong E[N]$ and similarly for $N'$, hence 
$E[N]\cong E[N']$. Since group algebras (over the same field) are isomorphic as Hopf algebras 
if and only if their groups are isomorphic, the result follows.
\end{proof}

The converse to Proposition \ref{me} is not true as we shall see below in Proposition \ref{rho-lam-not-hopf-iso}.  However, we have:

\begin{proposition}\label{PTthing} Using the notation as in Proposition \ref{me}, $H\cong H'$ if and only if there exists an isomorphism $\phi:N\to N'$ which respects the actions of $G$.
\end{proposition}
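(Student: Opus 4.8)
The plan is to pass everything through base change to $E$, where the comparison of Hopf algebras becomes a comparison of group algebras. Recall from the discussion following Theorem \ref{GP} that $G$ acts on $E[N]$ by $g(x\eta)=g(x)\,{}^{g}\eta$, that $H=(E[N])^{G}$, and that the multiplication map $\mu\colon E\otimes_{K}H\to E[N]$, $x\otimes h\mapsto xh$, is an isomorphism of $E$-Hopf algebras. The first observation I would record is that $\mu$ is $G$-equivariant, where $G$ acts on $E\otimes_{K}H$ through the first factor only: since each $h\in H$ is $G$-fixed, $g(xh)=g(x)g(h)=g(x)h$, which is precisely the image under $\mu$ of $g(x\otimes h)=g(x)\otimes h$. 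The analogous statement holds for $\mu'\colon E\otimes_{K}H'\to E[N']$. The second ingredient is the standard fact that the group-like elements of the group algebra $E[N]$ are exactly the elements of $N$, so that any Hopf algebra isomorphism $E[N]\to E[N']$ restricts to a group isomorphism $N\to N'$ (a Hopf map preserves group-likes, and an algebra map is multiplicative, so its restriction is a homomorphism, with inverse the restriction of the inverse map).

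For the direction $(\Leftarrow)$, suppose $\phi\colon N\to N'$ is a group isomorphism with $\phi({}^{g}\eta)={}^{g}\phi(\eta)$ for all $g\in G$, $\eta\in N$. Extending $\phi$ $E$-linearly gives an isomorphism $\widetilde\phi\colon E[N]\to E[N']$ of $E$-Hopf algebras, since a group isomorphism always induces a Hopf isomorphism of the associated group algebras. The compatibility of $\phi$ with the conjugation actions makes $\widetilde\phi$ commute with the $G$-actions: on a typical element one computes $\widetilde\phi\big(g(\sum x_\eta\eta)\big)=\sum g(x_\eta)\phi({}^{g}\eta)=\sum g(x_\eta)\,{}^{g}\phi(\eta)=g\big(\widetilde\phi(\sum x_\eta\eta)\big)$. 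Hence $\widetilde\phi$ carries $G$-fixed points to $G$-fixed points and restricts to a $K$-Hopf algebra isomorphism $H=(E[N])^{G}\to (E[N'])^{G}=H'$.

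For the direction $(\Rightarrow)$, suppose $\psi\colon H\to H'$ is a $K$-Hopf algebra isomorphism. Applying $E\otimes_{K}-$ yields an $E$-Hopf algebra isomorphism $\mathrm{id}_{E}\otimes\psi\colon E\otimes_{K}H\to E\otimes_{K}H'$, which is $G$-equivariant because $G$ acts only on the first tensor factor. Composing with the two $G$-equivariant descent isomorphisms $\mu,\mu'$ of the first paragraph produces a $G$-equivariant $E$-Hopf algebra isomorphism $\Psi=\mu'\circ(\mathrm{id}_{E}\otimes\psi)\circ\mu^{-1}\colon E[N]\to E[N']$. By the group-like characterization, $\Psi$ restricts to a group isomorphism $\phi=\Psi|_{N}\colon N\to N'$; and since $\Psi$ commutes with the $G$-actions while sending $N$ onto $N'$, the restriction satisfies $\phi({}^{g}\eta)={}^{g}\phi(\eta)$, i.e. $\phi$ respects the actions of $G$.

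The only real subtlety, and the step I would take the most care over, is the bookkeeping of the two distinct $G$-actions together with the claim that the descent isomorphism $E\otimes_{K}H\cong E[N]$ intertwines them; once that equivariance is pinned down, the proof is a transport of structure combined with the elementary fact that the group-likes recover $N$ from $E[N]$. The remaining ingredients — that a group isomorphism induces a Hopf isomorphism of group algebras, and that the functor $E\otimes_{K}-$ preserves Hopf isomorphisms — are routine and require no further comment.
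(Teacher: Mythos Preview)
Your argument is correct. The paper does not actually prove this proposition but defers entirely to \cite[Corollary 2.3]{TARP17c}; your descent argument---pushing the isomorphism up to $E[N]\cong E[N']$ via the $G$-equivariant identifications $E\otimes_K H\cong E[N]$, then recovering $\phi$ as the restriction to group-likes---is the standard route and almost certainly what the cited reference contains.
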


\begin{proof}   See \cite[Corollary 2.3]{TARP17c}. 
\end{proof}

We can use this proposition to show that the $H_c$ are all isomorphic.

\begin{proposition} \label{H_c-iso}  For $0\le c,d\le p-1$ we have $H_c\cong H_d$.
\end{proposition}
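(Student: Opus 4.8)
The plan is to apply Proposition~\ref{PTthing}, which reduces the Hopf algebra isomorphism $H_c \cong H_d$ to the existence of a group isomorphism $\phi\colon N_c \to N_d$ that respects the $G$-actions, i.e. is equivariant for the conjugation actions ${}^g(-)$ of $G=D_p$. So the whole task becomes producing one such $\phi$, and the main thing to extract from the preceding material is a clean description of how $G$ acts on each $N_c$.

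First I would record that action. The Lemma already computes ${}^r\eta_c = \eta_c$ and ${}^s\eta_c = \eta_c^{-1}$; since ${}^r(-)$ and ${}^s(-)$ are group automorphisms of $N_c$, these extend to all powers as ${}^r\eta_c^{\,i} = \eta_c^{\,i}$ and ${}^s\eta_c^{\,i} = \eta_c^{-i}$ for every $i$. The crucial observation—the one that makes the statement work—is that this description is \emph{independent of $c$}: in every $N_c$ the generator $r$ acts trivially and $s$ acts by inversion.

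Next I would define $\phi\colon N_c \to N_d$ on generators by $\phi(\eta_c) = \eta_d$, extended multiplicatively so that $\phi(\eta_c^{\,i}) = \eta_d^{\,i}$. Because $N_c = \gen{\eta_c}$ and $N_d = \gen{\eta_d}$ are both cyclic of order $2p$, this is a well-defined group isomorphism. To confirm $G$-equivariance it suffices to check on the generators $r,s$ of $G$. For $r$ we have $\phi({}^r\eta_c^{\,i}) = \phi(\eta_c^{\,i}) = \eta_d^{\,i} = {}^r\eta_d^{\,i} = {}^r\phi(\eta_c^{\,i})$, and for $s$ we have $\phi({}^s\eta_c^{\,i}) = \phi(\eta_c^{-i}) = \eta_d^{-i} = {}^s\eta_d^{\,i} = {}^s\phi(\eta_c^{\,i})$. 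Thus $\phi$ respects the action of $G$, and Proposition~\ref{PTthing} yields $H_c \cong H_d$.

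I do not expect a genuine obstacle here. The only way the argument could fail would be if the $G$-action on $N_c$ depended essentially on $c$; but the Lemma shows it does not, so the naive generator-to-generator map is automatically equivariant. Consequently the result is essentially immediate once the uniform action is isolated, and the equivariance check is a routine verification on the two generators of $D_p$.
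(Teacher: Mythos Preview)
Your proof is correct and follows essentially the same approach as the paper: define $\phi(\eta_c)=\eta_d$, note it is an isomorphism of cyclic groups of order $2p$, and verify equivariance using the Lemma's computations ${}^r\eta_c=\eta_c$, ${}^s\eta_c=\eta_c^{-1}$. The paper checks equivariance only on the generator $\eta_c$ rather than on all powers, but this is a cosmetic difference.
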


\begin{proof} Define $\phi:N_c \to N_d$ by $\phi(\eta_c)=\eta_d$. This is clearly an isomorphism; it remains to show that it respects the $D_p$-actions. But since the $D_p$-actions are identical with respect to the generators of the groups this is immediate:
\begin{align*}
\phi(\;^{r}\eta_c)&=\phi(\eta_c)=\eta_d = \;^r\eta_d = \;^r\phi(\eta_c)\\
\phi(\;^{s}\eta_c)&=\phi(\eta_c^{-1})=\eta_d^{-1} =\; ^s\eta_d = \;^s\phi(\eta_c).
\end{align*}
\end{proof}

On the other hand, we have:

\begin{proposition} \label{rho-lam-not-hopf-iso}  $K[D_p]\not\cong H_{\lambda}$.
\end{proposition}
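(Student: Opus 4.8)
The plan is to use Proposition~\ref{PTthing}, which reduces the Hopf algebra isomorphism $K[D_p]\cong H_\lambda$ to the existence of a group isomorphism $\phi:\rho(D_p)\to\lambda(D_p)$ that respects the $G$-actions (here $G=D_p$ and the action is conjugation by $\lambda(G)$). Since both $\rho(D_p)$ and $\lambda(D_p)$ are isomorphic to $D_p$ as abstract groups, Proposition~\ref{me} gives no obstruction; the whole content must come from the requirement that $\phi$ be equivariant. So I would assume for contradiction that such an equivariant $\phi$ exists and derive a contradiction from the way conjugation by $\lambda(D_p)$ acts on the two regular subgroups.

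First I would record the two conjugation actions explicitly. For the classical structure $N=\rho(D_p)$, Example~\ref{class} shows that $\lambda(g)\rho(h)\lambda(g^{-1})=\rho(h)$ for all $g,h$, so $G$ acts \emph{trivially} by conjugation on $\rho(D_p)$: every element of $\rho(D_p)$ is fixed. For the canonical non-classical structure $N=\lambda(D_p)$, conjugation by $\lambda(g)$ sends $\lambda(h)$ to $\lambda(g)\lambda(h)\lambda(g^{-1})=\lambda(ghg^{-1})$, so $G$ acts on $\lambda(D_p)$ by \emph{inner automorphisms}, i.e.\ $^g\lambda(h)=\lambda(ghg^{-1})$. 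An equivariant isomorphism $\phi$ would have to intertwine these: $\phi(\,^g\eta)=\,^g\phi(\eta)$ for all $g\in G$ and $\eta\in\rho(D_p)$.

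The key step is to exploit the mismatch between a trivial action and a nontrivial (inner) action. Since $G$ fixes every element of $\rho(D_p)$, equivariance forces $^g\phi(\eta)=\phi(\,^g\eta)=\phi(\eta)$, so every element of the image $\phi(\rho(D_p))=\lambda(D_p)$ must be fixed by the conjugation action of $\lambda(G)$ as well. But the action on $\lambda(D_p)$ is conjugation by inner automorphisms of $D_p$, and its fixed points are exactly the $\lambda(h)$ with $h$ central in $D_p$. Because $D_p$ has trivial center for $p\ge 3$ (the dihedral group of order $2p$ is centerless when $p$ is odd), the only fixed element of $\lambda(D_p)$ is the identity $\lambda(1)$. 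This contradicts the surjectivity of $\phi$ onto all of $\lambda(D_p)$, which has $2p>1$ elements. Hence no equivariant isomorphism exists, and by Proposition~\ref{PTthing} we conclude $K[D_p]\not\cong H_\lambda$.

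I expect the only real subtlety to be the centerless-ness of $D_p$, which is where the hypothesis $p\ge 3$ (odd) genuinely enters—for $D_2\cong C_2\times C_2$ the center is the whole group and the argument would collapse. The rest is bookkeeping: pinning down the two conjugation actions from Examples~\ref{class} and~\ref{canon}, and observing that an $G$-equivariant map must carry $G$-fixed points to $G$-fixed points. This last observation is the crux, and it turns the problem into a clean comparison of fixed-point sets rather than any direct manipulation of Hopf algebra structures.
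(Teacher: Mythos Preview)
Your proposal is correct and follows essentially the same approach as the paper. The paper also invokes Proposition~\ref{PTthing}, notes that $D_p$ acts trivially on $\rho(D_p)$, and derives a contradiction from equivariance; the only cosmetic difference is that the paper picks the specific element $\rho(r)$, observes $\phi(\rho(r))=\lambda(r^i)$ for some $1\le i\le p-1$, and computes $^s\phi(\rho(r))=\lambda(r^{-i})\ne\lambda(r^i)$, whereas you phrase the same obstruction globally in terms of the fixed-point set being $\lambda(Z(D_p))=\{1\}$.
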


\begin{proof} Suppose $K[D_p]\cong H_{\lambda}$. Then there exists an isomorphism $\phi:\rho(D_p) \to \lambda(D_p)$ which respects the $D_p$-actions. Note that $D_p$ acts trivially on $\rho(D_p)$. Pick $1\le i \le p-1$ such that $\phi(\rho(r))=\lambda(r^i)$. Then 
\[\phi(\;^s\rho(r)) = \phi(\rho(r)) =\lambda(r^i) \]
while
\[ ^s\phi(\rho(r))=\;^s(\lambda(r^i))=\lambda(s)\lambda(r^i)\lambda(s)=\lambda(sr^is)=\lambda(r^{-i}),\]
which is a contradiction since $r^i=r^{-i}$ if and only if $i=0$. Thus, $K[D_p]\not\cong H_{\lambda}$.
\end{proof}

\begin{remark} One could also prove Proposition \ref{rho-lam-not-hopf-iso} as follows.  Over $L$, $K[D_p]$
and $H_\lambda$ are isomorphic to $L[D_p]$ as Hopf algebras, thus their duals $K[D_p]^*$ and $H_\lambda^*$
are finite dimensional as algebras over $K$ and separable (as defined in \cite[6.4, page 47]{Waterhouse79}). 
Using the classification of such $K$-algebras \cite[6.4, Theorem]{Waterhouse79}, we conclude that $K[D_p]^*$ and $H_\lambda^*$ are not isomorphic 
as $K$-Hopf algebras, and so neither are $K[D_p]$ and $H_\lambda$.  In fact, by \cite[6.3, Theorem]{Waterhouse79},
$K[D_p]^*$ and $H_\lambda^*$ are not isomorphic as $K$-algebras, and consequently, $K[D_p]$ and $H_\lambda$ are not isomorphic as $K$-coalgebras.  As we will show in Section 5, however, $K[D_p]\cong H_\lambda$ as 
$K$-algebras.
\end{remark}

Picking $c=0$, we obtain the following.

\begin{theorem} There are three $K$-Hopf algebras which provide Hopf-Galois structures on a dihedral extension $L/K$ of degree $2p$, namely:
\begin{enumerate}
\item The group algebra $K[D_p]$, which provides the classical structure
\item The Hopf algebra $H_{\lambda}$, which provides the canonical non-classical structure
\item A commutative $K$-Hopf algebra $H_0$ which provides $p$ different structures.
\end{enumerate}
\end{theorem}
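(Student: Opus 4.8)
The plan is to assemble the preceding results into a count of Hopf algebra isomorphism classes and then to interpret the statement about $H_0$ providing $p$ structures. First I would recall that, by the Lemma together with \cite[Corollary 6.5]{Byott04}, the complete list of regular subgroups of $\Perm(D_p)$ normalized by $\lambda(D_p)$ is $\{\rho(D_p), \lambda(D_p), N_0, \dots, N_{p-1}\}$, a set of size $p+2$. By Theorem \ref{GP} these correspond bijectively to the Hopf-Galois structures on $L/K$, with associated Hopf algebras $K[D_p]$, $H_\lambda$, $H_0, \dots, H_{p-1}$. So the task reduces to sorting these $p+2$ Hopf algebras into isomorphism classes.

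Next I would establish non-commutativity of the first two and commutativity of the rest. Since $L/K$ is Galois we have $E = L$, and the $E$-form isomorphism gives $L \otimes_K H \cong L[N]$; as $H \hookrightarrow L \otimes_K H$, commutativity of $H$ is equivalent to commutativity of $L[N]$, i.e. to $N$ being abelian. Because $\rho(D_p)$ and $\lambda(D_p)$ are isomorphic to the non-abelian group $D_p$, both $K[D_p]$ and $H_\lambda$ are non-commutative, whereas each $N_c \cong C_{2p}$ is abelian, so every $H_c$ is commutative. This immediately separates the commutative $H_c$ from the two non-commutative algebras. Combining this with Proposition \ref{rho-lam-not-hopf-iso} (giving $K[D_p] \not\cong H_\lambda$) and Proposition \ref{H_c-iso} (giving $H_0 \cong \cdots \cong H_{p-1}$), I conclude that there are exactly three Hopf algebra isomorphism classes, represented by $K[D_p]$, $H_\lambda$, and $H_0$.

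Finally, for the claim that $H_0$ provides $p$ distinct structures, I would stress the distinction between Hopf algebra isomorphism and equality of Hopf-Galois structures. The subgroups $N_0, \dots, N_{p-1}$ are pairwise distinct as subsets of $\Perm(D_p)$ (shown in the Lemma via their unique order-$2$ elements $\rho(r^c s)$), so by the bijectivity in Theorem \ref{GP} they yield $p$ genuinely different Hopf-Galois structures on $L/K$; yet by Proposition \ref{H_c-iso} all $p$ of the acting Hopf algebras lie in a single isomorphism class, represented by $H_0$. Choosing $c = 0$ as the representative then gives the stated list and an affirmative answer to Question (1).

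The main obstacle is not any single computation — each piece is supplied by an earlier proposition — but rather keeping the two notions of sameness cleanly separated: the structures attached to $N_0, \dots, N_{p-1}$ are distinct as Hopf-Galois structures precisely because the regular subgroups differ, even though the underlying Hopf algebras are abstractly isomorphic. The only point requiring a short extra argument is the non-commutativity of $H_\lambda$, which I would handle through the $E$-form isomorphism $L \otimes_K H_\lambda \cong L[\lambda(D_p)]$ rather than by trying to exhibit non-commuting elements of $H_\lambda$ directly.
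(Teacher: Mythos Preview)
Your proposal is correct and follows essentially the same route as the paper: the theorem is stated there as an immediate consequence of the preceding results (the Lemma plus \cite[Corollary 6.5]{Byott04} for the list of regular subgroups, commutativity of $H_c$ to separate it from $K[D_p]$ and $H_\lambda$, Proposition~\ref{H_c-iso} to collapse the $H_c$, and Proposition~\ref{rho-lam-not-hopf-iso} to keep $K[D_p]$ and $H_\lambda$ apart), with the paper simply writing ``Picking $c=0$, we obtain the following.'' Your write-up is a little more explicit than the paper in two places---you justify commutativity of $H_c$ via the $E$-form isomorphism $L\otimes_K H\cong L[N]$ rather than asserting it, and you spell out why the $p$ structures are genuinely distinct via the bijectivity in Theorem~\ref{GP}---but these are elaborations of the same argument, not a different one.
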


We now wish to consider whether any two of the Hopf algebras above become isomorphic after base change to an intermediate field $K\subset F \subset L$. This question is relatively easy to answer in the dihedral case. Since a $K$-Hopf algebra $H$  is commutative if and only if $F\otimes_K H$ is commutative the Hopf algebra $H_0$ is not isomorphic to either $K[D_p]$ or $H_{\lambda}$ after base change. What remains is to determine whether we can have $F\otimes_K K[D_p] \cong F\otimes_K H_{\lambda}$.
But this is also easy:\ Since the center $Z(D_p)$ is trivial, the subgroup of $D_p$ defined as $\{g\in D_p:\ ^g\lambda(h)=\lambda(h), \forall h\in D_p\}$ is trivial.   Hence by \cite[Corollary 3.2]{GreitherPareigis87}, $F=L$ is the smallest field extension of $L$ for which $F\otimes_K H_\lambda\cong F[D_p]$.  Thus $F=L$ is minimal so that $F\otimes_K K[D_p]\cong F\otimes_K H_\lambda$.

\section{The algebra structure}

Let $L/K$ be Galois with group $D_p$, $\Q\subseteq K$.   In this section we investigate the question of when two Hopf algebras providing Hopf-Galois structures on $L/K$ are isomorphic as algebras. Throughout this section, when working with Hopf algebras, ``isomorphic'' refers to isomorphic as algebras.  Since $\mathrm{char}(K)$ does not divide $[L:K]$, Maschke's theorem and a result of Amitsur \cite[Theorem 1]{Am57} shows that the Hopf algebras are left semisimple.

Of course, $H_c\cong H_d$ for all $0\le c,d\le p-1$, and these (Hopf) algebras are not isomorphic to either $K[D_p]$ or $H_{\lambda}$ since they are commutative; they remain non-isomorphic after base change for the same reason.

It remains to consider the classical and the canonical non-classical structures, $K[D_p]$ and $H_\lambda$, respectively.   
Our main tool will be the Wedderburn-Artin decomposition.

The decomposition of $\Q[D_p]$ is given in \cite[Example (7.39)]{CurtisReiner81}. 

\begin{theorem} \label{group-algebra}  Let $D_p$, $p\ge 3$ prime, be the dihedral group of order $2p$.
Then
\[\Q[D_p]\cong \Q\times \Q\times \mathrm{Mat}_2(\Q(\zeta_p+\zeta_p^{-1})),\]
where $\zeta_p$ denotes a primitive $p$th root of unity.  
\end{theorem}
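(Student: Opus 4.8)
The plan is to decompose the rational group algebra $\Q[D_p]$ via its complex (or real) irreducible representations and then descend to $\Q$ by grouping Galois-conjugate representations. First I would recall the representation theory of $D_p$: over $\C$, the group $D_p$ has exactly two one-dimensional representations (the trivial one and the sign representation sending $r\mapsto 1$, $s\mapsto -1$, which is well-defined since $p$ is odd and $[D_p,D_p]=\gen{r}$ has index $2$), together with $(p-1)/2$ irreducible two-dimensional representations $\rho_k$ for $1\le k\le (p-1)/2$, given by $r\mapsto \mathrm{diag}(\zeta_p^k,\zeta_p^{-k})$ and $s$ mapping to the swap matrix $\left(\begin{smallmatrix}0&1\\1&0\end{smallmatrix}\right)$. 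A dimension count confirms this is complete: $2\cdot 1^2 + \tfrac{p-1}{2}\cdot 2^2 = 2 + 2(p-1) = 2p = |D_p|$.

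Next I would pass from the $\C$-decomposition to the $\Q$-decomposition by collecting rational conjugacy classes of characters under the action of $\Gal(\Q(\zeta_p)/\Q)$. The two one-dimensional characters are rational-valued and give two copies of $\Q$. The characters of the two-dimensional representations $\rho_k$ take values in $\Q(\zeta_p+\zeta_p^{-1})$ (since the trace of $r$ is $\zeta_p^k + \zeta_p^{-k}$), and the Galois group $\Gal(\Q(\zeta_p+\zeta_p^{-1})/\Q)$, which has order $(p-1)/2$, permutes the $\rho_k$ transitively. Hence all $(p-1)/2$ two-dimensional representations fuse into a single rational simple component, whose Wedderburn factor is a matrix algebra over a division ring with center $\Q(\zeta_p+\zeta_p^{-1})$. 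I would then argue that the Schur index is $1$ — the standard way is to observe that $\rho_k$ is realizable over the real field $\Q(\zeta_p+\zeta_p^{-1})$ itself (the explicit matrices above already have entries in $\Q(\zeta_p+\zeta_p^{-1})$ once one conjugates $r\mapsto\mathrm{diag}(\zeta_p^k,\zeta_p^{-k})$ into its real rational-canonical form $\left(\begin{smallmatrix}0&-1\\1&\zeta_p^k+\zeta_p^{-k}\end{smallmatrix}\right)$), so the division ring is exactly the field $\Q(\zeta_p+\zeta_p^{-1})$. Comparing $\Q$-dimensions, $1+1+4\cdot\tfrac{p-1}{2} = 2p$, fixes the matrix size at $2$, yielding the factor $\mathrm{Mat}_2(\Q(\zeta_p+\zeta_p^{-1}))$.

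Since Theorem \ref{group-algebra} explicitly cites \cite[Example (7.39)]{CurtisReiner81}, the cleanest route is simply to quote that reference; the paragraphs above are the argument underlying it. The step I expect to require the most care is the vanishing of the Schur index, i.e.\ verifying that the rational two-dimensional simple component is a full matrix algebra over the field $\Q(\zeta_p+\zeta_p^{-1})$ rather than over a nontrivial quaternion-type division algebra. For dihedral groups this is clear from the explicit real realization, but it is the one point where a naive dimension count alone does not suffice, so I would make the realizability of $\rho_k$ over $\Q(\zeta_p+\zeta_p^{-1})$ explicit to rule out any Brauer obstruction before concluding.
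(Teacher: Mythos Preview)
Your outline is correct. Note that the paper itself does not actually prove this theorem: it simply states the result and attributes it to \cite[Example~(7.39)]{CurtisReiner81}. What you have sketched is precisely the standard argument behind that reference --- enumerate the complex irreducibles of $D_p$, group the $(p-1)/2$ two-dimensional ones into a single $\Gal(\Q(\zeta_p+\zeta_p^{-1})/\Q)$-orbit, and kill the Schur index by exhibiting an explicit model over $\Q(\zeta_p+\zeta_p^{-1})$. One small point worth making explicit in your write-up: after putting $r$ into companion form $\left(\begin{smallmatrix}0&-1\\1&\zeta_p^k+\zeta_p^{-k}\end{smallmatrix}\right)$, one should check that $s$ can still be represented by a matrix with entries in $\Q(\zeta_p+\zeta_p^{-1})$; in fact the swap matrix $\left(\begin{smallmatrix}0&1\\1&0\end{smallmatrix}\right)$ continues to work, so the realizability claim is immediate.
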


Let $K$ be a field extension of $\Q$ with $E=K\cap \Q(\zeta_p+\zeta_p^{-1})$ and $l=[\Q(\zeta_p+\zeta_p^{-1}): E]$.
Then
\be \label {decomp-p}
K[D_{p}]\cong \left\{\begin{array}{l l} K \times K \times \Big(\mathrm{Mat}_2\big(K(\zeta_p+\zeta_p^{-1})\big)\Big )^{(p-1)/(2l)} & \zeta_p+\zeta_p^{-1}\not\in K \\
 K \times K \times \Big(\mathrm{Mat}_2(K)\Big)^{(p-1)/2} & \zeta_p+\zeta_p^{-1}\in K. \end{array}\right. 
\ee
The former case follows by noting that $E=\Q(\alpha)$ for $\alpha\in E$ satisfying an irreducible monic polynomial
of degree $(p-1)/(2l)$ over $\Q$.   The latter case follows from observing that if $\zeta_p+\zeta_p^{-1}\in K$, then the $(p-1)/2$ $2$-dimensional 
irreducible representations of $D_p$ over $\C$
correspond to $(p-1)/2$ characters with values in $K$ \cite[Chapter 5, 5.3]{Se77}.

What can be said about the decomposition of $H_{\lambda}$?   Since $H_{\lambda}$ is left semisimple, the $K$-algebra $H_{\lambda}$ decomposes into a product of matrix rings over division rings,
\[H_{\lambda}\cong \mathrm{Mat}_{q_1}(R_1)\times  \mathrm{Mat}_{q_2}(R_2)\times\cdots\times \mathrm{Mat}_{q_t}(R_t).\]
The division rings $R_i$ are finite dimensional $K$-algebras.   

Now, there are exactly two $1$-dimensional irreducible representations of $D_p$, with characters $\chi_1$ and $\chi_2$, corresponding to mutually orthogonal idempotents 
\[e_1={1\over 2p}\sum_{g\in D_p}\chi_1(g^{-1})g,\quad e_2={1\over 2p}\sum_{g\in D_p}\chi_2(g^{-1})g\] 
in $L[D_p]$.  Both $e_1$ and $e_2$ are fixed by the action of $D_p$, hence $e_1,e_2\in H_{\lambda}$. It follows that
\be \label {H-lambda}
H_{\lambda}\cong K\times K\times \prod_{j=1}^m \mathrm{Mat}_{q_j}(R_j), 
\ee
where $q_j\ge 1$ are integers and $R_j$ are division rings.   For later use, we set $S_j=\mathrm{Mat}_{q_j}(R_j)$ for $1\le j\le m$.   
Observe that  
\[\sum_{j=1}^m \dim_K(R_j)\cdot q_j^2 = 2(p-1).\]

For the moment we assume $p=3$, so that $\sum_{j=1}^m \dim_K(R_j)\cdot q_j^2=4$.   
Since the dimension of a division algebra over its center is a perfect square, we conclude that
\be \label {decomp-3}
H_{\lambda}\cong K\times K\times \mathrm{Mat}_{q}(R),
\ee  
where $1\le q\le 2$ and $R$ is a division ring.   If $q=1$, then the corresponding division ring $R$ is non-commutative.  If $q=2$, then $R=K$.  

Assume that the base field $K=\Q$, and let $L$ be the splitting field of $x^3-v$ over ${\mathbb Q}$ where $v$ is not a $3$rd power in $\Q$.  Let $\zeta_3$ denote a primitive $3$rd root of unity and let 
$\alpha  = \root 3\of v$.   Then $L={\mathbb Q}(\alpha,\zeta_3)$ is Galois with group $D_3$.  
The Galois action is given as $r(\alpha)=\zeta_3\alpha$, $r(\zeta_3)=\zeta_3$, $s(\alpha)=\alpha$, 
$s(\zeta_3)=\zeta_3^2$.   Let
\[b=\alpha s+\alpha\zeta_3 sr+\alpha\zeta_3^2 sr^2.\] 
Then $b\in L[D_3]$, and since $b$ is fixed by all elements of $D_3$, $b\in H_{\lambda}$.  Moreover, direct computation yields
$b^2=0$.  Thus the only possibility is that $q=2$ in (\ref{decomp-3}), and so,
\[H_{\lambda}\cong \Q\times \Q\times \mathrm{Mat}_2(\Q).\]

Since $\Q[D_3]\cong \Q\times \Q\times \mathrm{Mat}_2(\Q)$ by Theorem \ref{group-algebra}, here we have an instance of $H_{\lambda}\cong \Q[D_3]$ as $\Q$-algebras.  
Surprisingly, this is true for {\it any} $L/K$ Galois with group $D_p$, $p\ge 3$.  In fact,
it holds even more generally:

\begin{theorem} [Greither]\label{alg-iso2}  Let $\Q\subseteq K$, and let $L/K$ be a Galois extension with group $G$. 
Then 
\[H_{\lambda}\cong K[G]\]
as $K$-algebras.
\end{theorem}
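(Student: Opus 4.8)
The plan is to avoid comparing $H_{\lambda}$ and $K[G]$ head on, and instead to realize $H_{\lambda}$ as an endomorphism ring in which the normal basis theorem makes the identification with $K[G]$ transparent. Recall from the identification $N=\lambda(D_p)\cong G$ (here stated for general $G$) that $H_{\lambda}=(L[G])^{G}$, where $g\in G$ acts on $\sum_h a_h h$ by $g\cdot\sum_h a_h h=\sum_h g(a_h)\,(ghg^{-1})$, that is, by the Galois action on the coefficients together with \emph{conjugation} on the group elements; write $\tau_g$ for this semilinear automorphism of $L[G]$. The starting point is the classical $L$-algebra isomorphism $j\colon L[G]\xrightarrow{\ \sim\ }\End_K(L)$ determined by $j(h)(x)=h(x)$ and $L$-linearity in the coefficients. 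That $j$ is an isomorphism follows from Dedekind's independence of characters together with a dimension count, both sides having $K$-dimension $|G|^2$.

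First I would transport the twisting action $\tau_g$ across $j$. A direct computation shows
\[
j(\tau_g\xi)=\hat g\circ j(\xi)\circ\hat g^{-1},\qquad \hat g:=j(g)\in\GL_K(L),\ x\mapsto g(x),
\]
so that $\tau_g$ becomes conjugation by $\hat g$ inside $\End_K(L)$. Consequently $j$ carries the fixed ring $H_{\lambda}$ isomorphically onto the commutant of the image of $G$:
\[
H_{\lambda}\;\cong\;\{f\in\End_K(L):f\hat g=\hat g f\ \text{for all }g\in G\}\;=\;\End_{K[G]}(L),
\]
where $L$ is regarded as a left $K[G]$-module via the Galois action.

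Next I would invoke the normal basis theorem: there is $\alpha\in L$ with $\{g(\alpha)\}_{g\in G}$ a $K$-basis, so $L\cong K[G]$ as $K[G]$-modules. Hence $\End_{K[G]}(L)\cong\End_{K[G]}(K[G])\cong K[G]^{\mathrm{op}}$, the latter isomorphism being $f\mapsto f(1)$, since endomorphisms of a free rank-one module act by right multiplication. Finally $K[G]^{\mathrm{op}}\cong K[G]$ via the anti-automorphism $g\mapsto g^{-1}$. Chaining these isomorphisms yields $H_{\lambda}\cong K[G]$ as $K$-algebras, and the argument uses only the normal basis theorem, so it is insensitive to the particular field $K$ with $\Q\subseteq K$.

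The main obstacle is conceptual rather than computational: recognizing that a direct intertwiner fails. The tempting approach is to produce a unit $u\in L[G]$ conjugation by which intertwines the two descent data $\tau_g^{\text{conj}}$ and $\tau_g^{\text{triv}}$; but the relevant cocycle $g\mapsto g$ is in general nontrivial in $H^1(G,L[G]^\times)$, and the natural candidates $\sum_g g(\alpha)\,g$ and $\sum_g g^{-1}(\alpha)\,g$ are \emph{not} units (under $j$ they collapse onto the trace functional, hence are far from invertible). The key realization that unlocks the proof is that one should not insist on an inner intertwiner at all: passing through $j$ to $\End_K(L)$ converts the conjugation twist into genuine conjugation by $\GL_K(L)$, after which the commutant description and the normal basis theorem finish the job cleanly.
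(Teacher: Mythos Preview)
Your overall strategy—realize $H_\lambda$ inside $\End_K(L)$ as the centralizer of the Galois action, then invoke the normal basis theorem—is sound and leads to a cleaner proof than the paper's (which proceeds via Wedderburn components, twisted forms, and a Hilbert~90 argument). However, the specific map you write down does not do what you claim, and this is a genuine gap.

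The ``classical $L$-algebra isomorphism'' $j\colon L[G]\to\End_K(L)$ with $j(\sum a_h h)(x)=\sum a_h h(x)$ is \emph{not} an algebra homomorphism from the group algebra $L[G]$: for $a\in L$ and $g,h\in G$ one has $j((a g)(b h))(x)=ab\cdot(gh)(x)$ while $j(ag)\circ j(bh)(x)=a\,g(b)\cdot(gh)(x)$. What you have in mind is the isomorphism from the \emph{skew} group ring $L\ast G$ (where $g\cdot b=g(b)\cdot g$) onto $\End_K(L)$; but $H_\lambda$ sits inside the ordinary group algebra $L[G]$, not inside $L\ast G$. Nor does $j$ become multiplicative upon restriction to $H_\lambda$. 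For $G=D_3$ take $\xi=\sqrt{d}(r-r^2)$ and $\eta=y\,s+r(y)\,r^2s+r^2(y)\,rs$ with $\sqrt{d}\in L^{\langle r\rangle}$, $y\in L^{\langle s\rangle}$; both lie in $H_\lambda$, and a direct computation gives $j(\xi)\circ j(\eta)=-\,j(\xi\eta)$. So although your $j$ is a $G$-equivariant $K$-linear bijection (your intertwining identity $j(\tau_g\xi)=\hat g\,j(\xi)\,\hat g^{-1}$ is correct), it only yields a bijection of $K$-vector spaces $H_\lambda\to\End_{K[G]}(L)$, not an algebra isomorphism.

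The repair is immediate: replace $j$ by the Hopf--Galois action map, which on $H_\lambda$ sends $\sum a_g g$ to the operator $x\mapsto\sum a_g\,g^{-1}(x)$. This \emph{is} a $K$-algebra homomorphism on $H_\lambda$ (it is the module action, so multiplicativity is exactly associativity $(h_1h_2)\cdot x=h_1\cdot(h_2\cdot x)$), it still satisfies the same intertwining relation with conjugation by $\hat g$, hence lands in $\End_{K[G]}(L)$, and it is injective because the Hopf--Galois structure is faithful. A dimension count then forces it to be an isomorphism, and your normal-basis argument $\End_{K[G]}(L)\cong K[G]^{\mathrm{op}}\cong K[G]$ finishes the proof. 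With this correction your argument is valid and is genuinely different from, and more elementary than, the paper's descent-theoretic proof.
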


\begin{proof}  We prove the special case where $G=D_p$.  We thank C. Greither for the method of proof.   

From (\ref{decomp-p}) we obtain
\[L\otimes_K H_{\lambda}\cong L\otimes_K K[D_p]\cong L[D_{p}]\cong \left\{\begin{array}{l l} L \times L \times 
\Big(\mathrm{Mat}_2\big(L\otimes_K K(\zeta_p+\zeta_p^{-1})\big)\Big )^{(p-1)/(2l)} & \zeta_p+\zeta_p^{-1}\not\in K \\
 L \times L \times \Big(\mathrm{Mat}_2(L)\Big)^{(p-1)/2} & \zeta_p+\zeta_p^{-1}\in K, \end{array}\right.\]
with $l=[\Q(\zeta_p+\zeta_p^{-1}): E]=[K(\zeta_p+\zeta_p^{-1}): K]$, 
$E=K\cap \Q(\zeta_p+\zeta_p^{-1})$.  Thus the decomposition of $L[D_p]$ contains components of the form $L$ and 
$\mathrm{Mat}_2\big(L\otimes_K F\big )$, with $F=K(\zeta_p+\zeta_p^{-1})$.

The Hopf algebra $H_{\lambda}$ descends from $L[D_p]$ via the action of $D_p$ as usual:\ an element of $D_p$ acts by conjugation on $D_p$ and by the Galois action on $L$.   A character of $D_p$ has the same value on conjugate elements in $D_p$
 \cite[Chapter 2, 2.1, Proposition 1(iii)]{Se77}.  Thus the central indecomposable idempotents of $\C[D_p]$ as constructed in 
 \cite[Chapter 6, 6.3, Exercise 6.4]{Se77} are fixed by conjugation by elements of $D_p$.   Let
$e$ be a central indecomposable idempotent in $K[D_p]$.   Since $e$ is in the center of $\C[D_p]$, $e$ is a sum of 
central indecomposable idempotents of $\C[D_p]$ \cite[Chapter 6, 6.3, Exercise 6.4]{Se77}.  Thus, conjugation by elements of $D_p$ fixes $e$.  

Now, the central indecomposable idempotents of $K[D_p]$ correspond to the components in the decomposition 
(\ref{decomp-p}); let $M$ be the component
of $K[D_p]$ corresponding to $e$. For $\alpha\in M$, $g\in D_p$, $g\alpha g^{-1}=g\alpha e g^{-1} = g\alpha g^{-1}
geg^{-1} = g\alpha g^{-1}e\in M$, and so, congugation by $g$, which is an automorphism of $K[D_p]$, restricts to an automorphism of each component of $K[D_p]$.   Hence the action of $D_p$ preserves the Wedderburn-Artin components in the decomposition of $L\otimes_K K[D_p]\cong L[D_p]$.  So $D_p$ can be thought of as acting on these components.  The two copies of $L$ in the decomposition of $L[D_p]$ descend to the two copies of $K$ in the decomposition (\ref{H-lambda})
of $H_{\lambda}$, and each copy of $\mathrm{Mat}_2\big(L\otimes_K F\big )$ descends to a component 
$S=S_j$ in the decomposition (\ref{H-lambda}) of $H_{\lambda}$; the $K$-algebra $S$ is an $L$-form of $\mathrm{Mat}_2\big(F\big )$.   We want to show that $S\cong \mathrm{Mat}_2\big(F\big )$ as $K$-algebras, and so $H_{\lambda}\cong K[D_p]$.  
 
Let $\mathrm{Aut}(\mathrm{Mat}_2(F))$ denote the automorphism group scheme of 
$\mathrm{Mat}_2(F)$ in the sense of \cite[\S 7.6]{Waterhouse79}.   By \cite[Theorem, p. 137]{Waterhouse79} the isomorphism classes of $L$-forms of $\mathrm{Mat}_2(F)$ correspond to the cohomology set $\mathrm{H}^1(D_p,\mathrm{Aut}(\mathrm{Mat}_2(L\otimes_K F))$.  
A $1$-cocycle (crossed homomorphism)
is a function $f: D_p\rightarrow \mathrm{Aut}(\mathrm{Mat}_2(L\otimes_K F))$ which satisfies $f(gh)=f(g)\circ (g\cdot f(h))$, 
for $g,h\in D_p$.  The action $g\cdot f(h)$ of the element $g\in D_p$ on the automorphism $f(h)$ in  
$\mathrm{Aut}(\mathrm{Mat}_2(L\otimes_K F))$ is induced by the Galois action on $L$:\  we have 
\[g\cdot f(h)=(g\otimes I_F)f(h)(g^{-1}\otimes I_F),\]
where $g,h\in D_p$, and $I_F$ is the identity map on $F$.   The trivial element in 
$\mathrm{H}^1(D_p,\mathrm{Aut}(\mathrm{Mat}_2(L\otimes_K F))$ is represented by the $1$-cocycle

\[g\mapsto \phi(g\otimes I_F)\phi^{-1}(g^{-1}\otimes I_F),\]

where $\phi$ is any element of $\mathrm{Aut}(\mathrm{Mat}_2(L\otimes_K F))$.   The $L$-form $S$ comes from a particularly simple $1$-cocycle 
$\hat f: D_p\rightarrow \mathrm{Aut}(\mathrm{Mat}_2(L\otimes_K F))$.  For $g\in D_p$, $\hat f(g)$ is conjugation by 
$g$ on $\mathrm{Mat}_2(F)\subseteq \mathrm{Mat}_2(L\otimes_K F )$.

Let $\mathrm{Inn}(\mathrm{Mat}_2(L\otimes_K F))$ denote the group of inner automorphisms.  Since every element of 
$\mathrm{Inn}(\mathrm{Mat}_2(L\otimes_K F))$ is given as conjugation by some element 
of $\mathrm{GL}_2(L\otimes_K F)$, there is a surjection
of groups $\psi: \mathrm{GL}_2(L\otimes_K F)\rightarrow \mathrm{Inn}(\mathrm{Mat}_2(L\otimes_K F))$ with 
$\ker(\psi)=(L\otimes_K F)^\times$.  Thus, there is an induced map in cohomology  
\[\mathrm{H}^1(D_p,\mathrm{GL}_2(L\otimes_K F))\stackrel{\psi}{\longrightarrow }
\mathrm{H}^1(D_p,\mathrm{Inn}(\mathrm{Mat}_2(L\otimes_K F))).\]

A $1$-cocycle $q\in \mathrm{H}^1(D_p,\mathrm{GL}_2(L\otimes_K F))$ is a function 
$q: D_p\rightarrow \mathrm{GL}_2(L\otimes_K F)$ which satisfies $q(gh)=q(g)(g\cdot q(h))$.  The action of $g\in D_p$ on $q(h)\in \mathrm{GL}_2(L\otimes_K F)$ is through the Galois action on $L$ as above.

There is a special $1$-cocycle in $\mathrm{H}^1(D_p,\mathrm{GL}_2(L\otimes_K F))$:\ the function $\hat q$
in which each $g\in D_p$ is identified with its image in $\mathrm{Mat}_2(F)\subseteq \mathrm{Mat}_2(L\otimes_K F)$
under the map $K[D_p]\rightarrow \mathrm{Mat}_2(F)$.  So conjugation by $g$ in $\mathrm{Mat}_2(F)$ is precisely the conjugation action of 
$g$ on $K[D_p]$ restricted to the component $\mathrm{Mat}_2(F)$.  It follows that
\[\psi([\hat q])=[\hat f],\]
where $[\hat q]$ denotes the equivalence class of $\hat q$ in $\mathrm{H}^1(D_p,\mathrm{GL}_2(L\otimes_K F))$
and $[\hat f]$ denotes the equivalence class of $\hat f$ in $\mathrm{H}^1(D_p,\mathrm{Aut}(\mathrm{Mat}_2(L\otimes_K F))$.
The class $[\hat f]$ corresponds to the isomorphism class of the $L$-form $S$.    

Now if $\zeta_p+\zeta_p^{-1}\in K$, then $F=K$. Thus, $L=L\otimes_K F$ and
\[\mathrm{H}^1(D_p,\mathrm{GL}_2(L\otimes_K F))=\mathrm{H}^1(D_p,\mathrm{GL}_2(L)).\]
By Hilbert's Theorem 90, $\mathrm{H}^1(D_p,\mathrm{GL}_2(L))$ is trivial, and so, $[\hat q]$ is trivial, and consequently, 
$[\hat f]$ is trivial.  It follows that
$S\cong \mathrm{Mat}_2(K)$ as $K$-algebras, and so $H_\lambda\cong K[D_p]$. 

If $\zeta_p+\zeta_p^{-1}\not\in K$, then C. Greither has provided a generalization of Hilbert's Theorem 90 to yield 
$\mathrm{H}^1(D_p,\mathrm{GL}_2(L\otimes_K F)$ trivial.  As above, $[\hat f]$ is trivial, and so $S\cong \mathrm{Mat}_2(F)$ as $K$-algebras.  It follows that 
$H_\lambda\cong K[D_p]$. 
\end{proof}

We summarize our findings in this section.  The Hopf algebras that provide Hopf-Galois structures in the case that $L/K$ is Galois with group $D_p$
fall into two $K$-algebra isomorphism classes represented by $K[D_p]$ and $H_0$.   So a single $K$-algebra (e.g., $K[D_p]$)
can be be endowed with multiple coalgebra structures, resulting in multiple (non-isomorphic) Hopf algebras 
(e.g., $K[D_p]$, $H_\lambda$) giving different Hopf-Galois structures on $L/K$ (e.g., classical and canonical non-classical). 

\section{Explicit structure computations}

Let $L/K$ be Galois with group $D_p$, $\Q\subseteq K$.  We find generators over $K$ for the Hopf algebras 
$K[D_p]$, $H_{\lambda}$, $H_0, H_1, \dots, H_{p-1}$ constructed above. Let $L^{\gen{r}}$ be the unique quadratic extension of $K$ contained in $L$. Pick $d\in L$ such that $L^{\gen{r}}=K(\sqrt{d})$. Note that $s(\sqrt{d})=-\sqrt{d}$. Additionally, let $y\in L$ be so that $K(y)=L^{\gen{s}}$.

The simplest case, of course, is $K[D_p]$: it has a $K$-basis $D_p$, and $\{r,s\}$ generates $K[D_p]$  as a $K$-algebra.

We next turn to $H_{\lambda}$. Suppose $h\in H_{\lambda}$. Identifying $D_p$ with $\lambda(D_p)$, we have $h\in L[D_p]$ 
with $h$ fixed by $D_p$. Let 
\[h=\sum_{i=0}^{p-1} a_i r^i + \sum_{i=0}^{p-1}b_i r^is,\; a_i,b_i \in L.\]
Then
\[^rh=\sum_{i=0}^{p-1} r(a_i) r^i + \sum_{i=0}^{p-1} r(b_i) r^{i+1}sr^{-1} = \sum_{i=0}^{p-1} r(a_i) r^i + \sum_{i=0}^{p-1}r(b_i) r^{i+2}s,\]
and since $^rh=h$ we have $a_i\in L^{\gen{r}}$ and $r(b_i)=b_{i+2}$ for all $i$ (where $i+2$ is considered $\mmod p$). Thus, $b_i = r^{i(p+1)/2}(b_0)$ for all $i$.

Furthermore,
\[^sh=\sum_{i=0}^{p-1} s(a_i) sr^is + \sum_{i=0}^{p-1} s(b_i)s r^{i}s^2=\sum_{i=0}^{p-1} s(a_i) r^{-i} + \sum_{i=0}^{p-1} s(b_i) r^{-i}s,\]
which after interchanging $i$ with ${p-i}$ for all $i\ne 0$ gives
\[^sh =s(a_0)+s(b_0)s+ \sum_{i=1}^{p-1} s(a_{p-i}) r^{i} + \sum_{i=1}^{p-1} s(b_{p-i}) r^{i}s\]
and so $a_0\in K,\; s(a_i)=a_{p-i},s(b_0)=b_0$. Note that $s(b_i)=b_{p-i}$ as well, but this followed previously since 
\[s(b_i)=s r^{i(p+1)/2}(b_0) =  r^{-i(p+1)/2}s(b_0) =  r^{-i(p+1)/2}(b_0)= r^{(p-i)(p+1)/2}(b_0)=b_{p-i}. \]
Since $r^{(p-i)(p+1)/2}=r^{i(p-1)/2}$ it follows that
\[H_{\lambda}=\left\{a_0+\sum_{i=1}^{(p-1)/2} (a_i r^i + s(a_i) r^{-i}) + b_0s + 
\sum_{i=1}^{p-1}r^{i(p-1)/2}(b_0)r^{-i}s:\ a_0\in K, a_i\in L^{\gen{r}}, b_0\in L^{\gen{s}}\right\}.\]

\begin{example} \label{H-3} Suppose $p=3$. Then 
\[H_{\lambda}=\left\{a_0+a_1r + s(a_1)r^{2} + b_0s+ r(b_0)sr+r^2(b_0)sr^2:\ a_0\in K, a_1\in L^{\gen{r}}, b_0\in L^{\gen{s}}\right\}.\]
A $K$-basis for $H_\lambda$ is
\[ \{1,r+r^2,\sqrt{d}(r-r^2), s+rs+r^2s,ys+r(y)rs+r^2(y)r^2s,y^2s+r(y^2)rs+r^2(y^2)r^2s\}.\]
\end{example}

We next consider the structure of $H_c$, $0\le c\le p-1$. If we write
\[h=\sum_{i=0}^{2p-1} a_i \eta^i,\ \ \eta=\eta_c,\]
where $a_i \in L$ for all $i$ then,  since $r$ acts trivially on $\eta$,
\[^rh= \sum_{i=0}^{2p-1} r(a_i) \eta^i =\sum_{i=0}^{2p-1} a_i \eta^i. \]
Thus $r(a_i)=a_i$ for all $i$, hence $a_i \in L^{\gen{r}}$. Also, since $^s\eta=\eta^{-1}$,
\[^sh= \sum_{i=0}^{2p-1} s(a_i) \eta^{-i} =\sum_{i=0}^{2p-1} a_i \eta^i, \]
from which it follows that $s(a_0)=a_0$ and $s(a_i)=a_{2p-i}$ for all $i>0$. In particular, $s(a_p)=a_p$, so $a_0,a_p\in K$. Thus,
\[H_c = \left\{a_0+ a_p \eta^p+\sum_{i=1}^{p-1}\big(a_i\eta^i+s(a_i)\eta^{-i}\big):\  a_0, a_p\in K, a_i\in L^{\gen{r}}, 1\le i\le p-1\right\}, \]
and $H_{c}$ has $K$-basis
\[\{1,\eta^p,\eta+\eta^{-1},\eta^2+\eta^{-2},\dots, \eta^{p-1}+\eta^{-(p-1)}, \sqrt{d}(\eta-\eta^{-1}),\sqrt{d}(\eta^2-\eta^{-2}),\dots, \sqrt{d}(\eta^{p-1}-\eta^{-(p-1)} ) \}.\]

\begin{example} \label{Hc-3}   Suppose $p=3$.  Then
\[H_c = \left\{a_0+ a_3\eta^3+a_1\eta+s(a_1)\eta^5+a_2\eta^2+s(a_2)\eta^4:\ a_0, a_3\in K, a_1, a_2\in L^{\gen{r}}\right\}, \]
and $H_{c}$ has $K$-basis
\[\{1,\eta^3,\eta+\eta^{5},\eta^2+\eta^{4},\sqrt{d}(\eta-\eta^{5}),\sqrt{d}(\eta^2-\eta^{4})\}.\]
\end{example}

\section{Example:\ Hopf Galois structures in the case $D_3$}

We close with an analysis of the Hopf Galois structures in the case $p=3$.  Let $L/K$ be any Galois extension with group $D_3$, 
$\Q\subseteq K$.  As we have seen, there are two regular subgroups normalized by $\lambda(D_3)$ and isomorphic to $D_3$, namely, $\rho(D_3)$ and $\lambda(D_3)$, and three regular subgroups normalized by $\lambda(D_3)$ and isomorphic to $C_6$, the cyclic group of order $6$, namely, $N_0$, $N_1$ and $N_2$.

By Proposition \ref{rho-lam-not-hopf-iso} $K[D_3]\not\cong H_{\lambda}$, as $K$-Hopf algebras, and by
Theorem \ref{alg-iso2}
$K[D_3]\cong H_{\lambda}$, as $K$-algebras, with Wedderburn-Artin decomposition
\[K[D_3]\cong H_{\lambda}\cong K\times K\times \mathrm{Mat}_2(K).\]
By Proposition \ref{H_c-iso}, $H_0\cong H_1\cong H_2$ as Hopf algebras, and hence as $K$-algebras.   

We seek the Wedderburn-Artin decomposition and the Hopf algebra structure of $H_0$ (hence of $H_1$ and $H_2$).  
In contrast to the situation with $H_{\lambda}$, the structure of $H_0$
seems to depend on the extension $L/K$, specifically on the fixed field $L^{\gen{r}}$.   

Here is how we can compute the structure of $H_0$.  By \cite[Corollary 3.6]{Kohl13}, 
\[\{g\in \lambda(D_3):\ ^g\eta=\eta, \forall \eta\in N_0\}\]
is precisely the $3$-Sylow subgroup $\lambda(\langle r\rangle)\le \lambda(D_3)$, which we identify with $\langle r\rangle$.  
There is an induced action of $D_3/\langle r\rangle$ on $L[N_0]$.   Note that $D_3/\langle r\rangle\cong C_2$, the cyclic group of order $2$.   By the Fundamental Theorem of Galois theory, 
$D_3/\langle r\rangle\cong C_2$ is the group of the Galois extension $F/K$, where $F=L^{\gen{r}}$;
$F$ is a quadratic extension of $K$.  We write $F=K[z]/(z^2-b)$ for $b\in K$, $z$ indeterminate.    

Now, $D_3/\langle r\rangle\cong C_2$ can be viewed as the 
group of automorphisms of $N_0\cong C_6$.    We have
\[H_0=(L[N_0])^{D_3}=(F[C_6])^{C_2},\]
where the action of $C_2$ on $F[C_6]$ is  by the Galois group on $F$ and as automorphisms on $C_6$. 

Now, $F$ is a $C_2$-Galois extension of $K$, \cite[page 130]{HP86}.  So by \cite[Theorem 5]{HP86}, $F$ corresponds to an $F$-Hopf algebra form of $K[C_6]$, namely, $(F[C_6])^{C_2}$, which must of course be $H_0$.   

And so, $H_0$ is the fixed ring of $F[C_6]$ under the action of $C_2$, and $H_0$ is an $F$-form of $K[C_6]$.  
Under these conditions, $H_0$ can be characterized.  Let $x,y$ be indeterminates and recall
$F=K[z]/(z^2-b)$.   The method of Haggenm\"{u}ller and Pareigis in \cite[Theorem 6, p. 134]{HP86} applies to 
yield $H_0\cong K[x,y]/I$, 
\[I=(y^2-bx^2+u,(x-2)(x-1)(x+1)(x+2),(x-1)(x+1)(xy)),\]
with $u\in K^\times$, $u=4b$.  The Hopf algebra structure of $H_0$ is defined by
\[\Delta(\bar x)={1\over 2}\bar x\otimes \bar x+{1\over 2b}\bar y\otimes \bar y,\]
\[\Delta(\bar y) =  {1\over 2}\bar x\otimes \bar y+{1\over 2}\bar y\otimes \bar x,\]
\[\varepsilon(\bar x)=2,\quad \varepsilon(\bar y)=0,\quad \sigma(\bar x)=\bar x,\quad \sigma(\bar y)=-\bar y.\]
where $\bar x=x\mmod I$, $\bar y= y\mmod I$. 

\begin{remark}  The Hopf algebra structure of $H_0$ does not depend on the choice of generator for the quadratic extension $F$.
Indeed, suppose that $F=F'$ where $F'=K[z]/((z^2-a'z-b')$.  Then the induced isomorphism $\phi: F[C_6]\rightarrow F'[C_6]$
respects the action of $C_2$, hence the fixed rings $H_0$, $H_0'$ are isomorphic as $K$-Hopf algebras.  
\end{remark}

\begin{example}  \label{H0} We assume that the base field $K=\Q$ and compute the structure of $H_0$ in the case that $L$ is the splitting field of $x^3-v$, irreducible over ${\mathbb Q}$.   In this case, $F=L^{\gen{r}}=\Q(\zeta_3)$, hence
$F=\Q[z]/(z^2+3)$, and so, $b=-3$, $u=-12$.    We then have $H_0=\Q[x,y]/I$, with 
\[I=(y^2+3x^2-12, (x-2)(x-1)(x+1)(x+2), (x-1)(x+1)(xy)).\]
The Hopf algebra structure of $H_0$ is given as
\[\Delta(\bar x)={1\over 2}\bar x\otimes \bar x-{1\over 6}\bar y\otimes \bar y,\]
\[\Delta(\bar y) =  {1\over 2}\bar x\otimes \bar y+{1\over 2}\bar y\otimes \bar x,\]
\[\varepsilon(\bar x)=2,\quad \varepsilon(\bar y)=0,\quad \sigma(\bar x)=\bar x,\quad \sigma(\bar y)=-\bar y.\]
\end{example}

We can also obtain the Wedderburn-Artin decomposition of $H_0$ in Example \ref{H0}.  

\begin{proposition}  Assume the conditions of Example \ref{H0}.  Then
\[H_0\cong \Q\times \Q\times \Q\times \Q\times \Q\times \Q,\] 
as $\Q$-algebras.  
\end{proposition}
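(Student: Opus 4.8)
The plan is to use the explicit presentation $H_0 = \Q[x,y]/I$ recorded in Example \ref{H0}, together with the semisimplicity of $H_0$ noted at the beginning of Section 5, and to prove that $H_0$ splits completely into copies of $\Q$ by counting $\overline{\Q}$-points. First I would observe that $H_0$ is commutative (it arises from the abelian regular subgroup $N_0 \cong C_6$) and $6$-dimensional over $\Q$, with the basis exhibited in Example \ref{Hc-3}. Being commutative and semisimple over the characteristic-zero field $\Q$, it is a finite product of number fields, $H_0 \cong \prod_{i} K_i$ with $\sum_i [K_i : \Q] = 6$; equivalently $H_0$ is a separable (\'etale) $\Q$-algebra, so base change to an algebraic closure gives $H_0 \otimes_{\Q} \overline{\Q} \cong \overline{\Q}^{6}$. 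In particular $\Hom_{\Q\text{-alg}}(H_0, \overline{\Q})$ has exactly six elements, and $\Gal(\overline{\Q}/\Q)$ acts on this set with one orbit per factor $K_i$, the orbit of $K_i$ having size $[K_i : \Q]$. Hence it suffices to show that all six of these homomorphisms take values in $\Q$: then every orbit is a singleton, every $K_i = \Q$, and $H_0 \cong \Q^{6}$.

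Next I would determine these homomorphisms explicitly. Since $H_0$ is generated as a $\Q$-algebra by $\bar x$ and $\bar y$, a homomorphism $H_0 \to \overline{\Q}$ is determined by a pair $(a,b) \in \overline{\Q}^2$ with $\bar x \mapsto a$, $\bar y \mapsto b$, subject to the three generators of $I$ vanishing:
\[ b^2 + 3a^2 - 12 = 0, \qquad (a-2)(a-1)(a+1)(a+2) = 0, \qquad (a-1)(a+1)\,ab = 0. \]
The middle equation forces $a \in \{2,-2,1,-1\}$. For $a = \pm 2$ the first equation gives $b^2 = 0$, hence $b = 0$, and the third holds; for $a = \pm 1$ the first gives $b^2 = 9$, hence $b = \pm 3$, and the third holds again. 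This produces exactly the six pairs
\[ (2,0),\quad (-2,0),\quad (1,3),\quad (1,-3),\quad (-1,3),\quad (-1,-3), \]
all of which lie in $\Q^2$.

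Finally I would conclude: these six rational pairs give six distinct $\Q$-algebra homomorphisms $H_0 \to \Q \subseteq \overline{\Q}$, and since we already know $|\Hom_{\Q\text{-alg}}(H_0,\overline{\Q})| = 6$, these are all of them. Thus every homomorphism has image in $\Q$, the Galois action on $\Sp(H_0)(\overline{\Q})$ is trivial, each residue field equals $\Q$, and $H_0 \cong \Q \times \Q \times \Q \times \Q \times \Q \times \Q$ as claimed. The one point requiring care is that the appearance of $b^2$ in the first relation naively suggests a non-reduced (double) point at $a = \pm 2$; I expect this to be the main potential obstacle. It is dissolved by the semisimplicity of $H_0$, which already guarantees that $H_0 \otimes \overline{\Q}$ is reduced, so the number of geometric points is exactly $\dim_\Q H_0 = 6$ and no nilpotents occur. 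Invoking semisimplicity in this way avoids having to verify by hand that $I$ is a radical ideal.
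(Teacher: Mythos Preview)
Your proof is correct and follows essentially the same approach as the paper: both arguments compute the six $\Q$-rational solutions $(\pm 2,0)$, $(\pm 1,\pm 3)$ of the defining equations, and then invoke the semisimplicity of $H_0$ (established at the start of Section~5) together with $\dim_{\Q} H_0=6$ to conclude that the product-of-fields decomposition must be $\Q^6$. The only cosmetic difference is packaging: you phrase the conclusion via Galois orbits on $\Hom_{\Q\text{-alg}}(H_0,\overline{\Q})$, whereas the paper checks directly that the six evaluation kernels are pairwise distinct maximal ideals of codimension one.
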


\begin{proof}   The ideal $I$ determines an affine variety in $X\subseteq \Q^2$ consisting of exactly six points:
\[P_1=(-2,0), P_2=(-1,3), P_3=(1,3),P_4=(2,0), P_5=(1,-3), P_6=(-1,-3),\]
This is the set of common zeros of the polynomials in $I$ (Figure 1).

\vspace{.5cm}

\hspace{3.0cm} \includegraphics[height=3.0in]{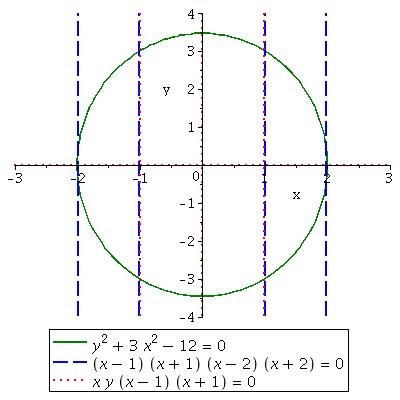}

\vspace{.2cm}

\hspace{3cm} Figure 1:\    Graph of variety determined by $I$. 

\vspace{.5cm}

As a commutative $\Q$-algebra, $H_0\cong \Q[x,y]/I$ is a product of fields with $\dim_{\Q}(H_0)=6$.  For $1\le j\le 6$, let 
$(x_j,y_j)$ be the coordinates of the point $P_j$ and let $\Psi_j: H_0\rightarrow \Q$ denote the ring homomorphism 
defined as $\bar x\mapsto x_j$, $\bar y\mapsto y_j$.   Then $\Psi_j$ is surjective and $\ker(\Psi_j)$ is an ideal $I_j$ of $H_0$ of dimension $5$ over $\Q$.  Now, $\bar y - y_j -6\bar x+6x_j\in I_j$, yet $\bar y - y_j -6\bar x+6x_j\not \in I_k$ whenever 
$j\not = k$.  Thus the ideals $I_j$, $1\le j\le 6$, are distinct and therefore must arise by omitting one factor isomorphic to 
$\Q$ from the Wedderburn-Artin decomposition of $H_0$.   It follows that the decomposition of $H_0$ must contain at least $6$ factors isomorphic to $\Q$, hence $H_0\cong \Q^6$. 
\end{proof}

\end{document}